\tikzset{w/.style={circle, draw,inner sep=1pt},b/.style={circle,draw,fill,inner sep=2pt}, s/.style={rectangle, draw,inner sep=3pt}}
\newcommand{\nocontentsline}[3]{}
\newcommand{\tocless}[2]{\bgroup\let\addcontentsline=\nocontentsline#1{#2}\egroup}
\newcommand{\bL}{\mathbb{L}}
\newcommand{\bT}{\mathbb{T}}
\newcommand{\Z}{\mathbb{Z}}
\newcommand{\cC}{\mathcal{C}}
\newcommand{\cO}{\mathcal{O}}
\newcommand{\e}{\epsilon}
\newcommand{\Arr}{\mathrm{Arr}}
\newcommand{\Der}{\mathrm{Der}}
\newcommand{\dAff}{\mathrm{dAff}}
\newcommand{\dgmix}{\mathrm{dg}^{gr,\epsilon}}
\newcommand{\bdgmix}{\mathbf{dg}^{gr,\epsilon}}
\newcommand{\DR}{\mathrm{DR}}
\newcommand{\End}{\mathrm{End}}
\newcommand{\Hom}{\mathrm{Hom}}
\newcommand{\id}{\mathrm{id}}
\newcommand{\Map}{\mathrm{Map}}
\newcommand{\bMap}{\boldsymbol{\mathrm{Map}}}
\newcommand{\sSet}{\mathrm{sSet}}
\newcommand{\Sym}{\mathrm{Sym}}
\newcommand{\Symp}{\mathbf{Symp}}
\newcommand{\dg}{\mathrm{dg}}
\newcommand{\CAlg}{\mathrm{CAlg}}
\newcommand{\bCAlg}{\mathbf{CAlg}}
\newcommand{\bZ}{\mathbf{Z}}
\newcommand{\cdga}{\mathrm{cdga}}
\newcommand{\triv}{\mathrm{triv}}
\newcommand{\QCoh}{\mathrm{QCoh}}
\newcommand{\Perf}{\mathrm{Perf}}
\newcommand{\bPerf}{\boldsymbol{\mathrm{Perf}}}
\newcommand{\dSt}{\mathrm{dSt}}
\newcommand{\cF}{\mathcal{cF}}
\newcommand{\bCAlgmix}{\mathbf{CAlg}^{gr,\epsilon}}
\newcommand{\bCoAlgd}{\mathbf{CoAlgd}}
\newcommand{\PrSymp}{\mathbf{PrSymp}}
\newcommand{\QF}{\mathbf{QF}}
\DeclareMathOperator{\Spec}{Spec}
\newtheorem{thm}{Theorem}[section]
\newtheorem{prop}[thm]{Proposition}
\theoremstyle{definition}
\newtheorem{defn}[thm]{Definition}
\theoremstyle{remark}
\newtheorem{remark}[thm]{Remark}
\begin{document}
\address{Department of Mathematics,
The University of British Columbia, 1984 Mathematics Road,
Vancouver, BC -
Canada V6T 1Z2}
\email{samuelbach@math.ubc.ca}
\email{samuelbach@hotmail.fr}
\author{Samuel Bach}
\title{The derived moduli stack of shifted symplectic structures}
\address{Dipartimento di Matematica, Universit\`a di Milano, Via Cesare Saldini 50, 20133 Milan, Italy}
\email{valerio.melani@unimi.it}
\email{valerio.melani@outlook.com}
\author{Valerio Melani}
\begin{abstract}
We introduce and study the derived moduli stack $\Symp(X,n)$ of $n$-shifted symplectic structures on a given derived stack $X$, as introduced in \cite{PTVV}. In particular, under reasonable assumptions on $X$, we prove that $\Symp(X,n)$ carries a canonical quadratic form, in the sense of \cite{Ve}. This generalizes a classical result of \cite{FH}, which was established in the classical $C^{\infty}$-setting, to the broader context of derived algebraic geometry, thus proving a conjecture stated in \cite{Ve}.
\end{abstract}
\maketitle

\tableofcontents

\addtocontents{toc}{\protect\setcounter{tocdepth}{2}}

\section*{Introduction}

Let $M$ be a closed smooth manifold. One natural question in symplectic geometry is to classify all possible symplectic structures on $M$: a reasonable approach to this is to study the moduli space $\Symp(M)$ of symplectic structures on $M$. The space $\Symp(M)$ can be studied from the point of view of symplectic topology (see for example \cite{MT}, \cite{Sm}, \cite{Vi} to name a few), but in this paper we will rather be interested in its geometry. One of the main results in this direction is given by Fricke and Habermann, who in \cite{FH} construct a (pseudo-)Riemaniann structure on $\Symp(M)$. 

The purpose of this paper is to extend the results of \cite{FH} to the setting of derived algebraic geometry. Derived algebraic geometry can be informally understood as the study of generalized spaces (i.e. derived stacks), whose local models are derived commutative algebras, that is to say simplicial commutative algebras. If we
suppose to be working over a base field $k$ of characteristic zero, the local models can
also be taken to be non-positively graded commutative dg algebras. We refer for example to \cite{To} (and references therein) for a more precise survey.

In the seminal paper \cite{PTVV}, the authors introduced the notion of $n$-shifted symplectic structure on a given derived stack $X$, where $n$ is any integer. On the other hand, there is a parallel theory of shifted quadratic forms on derived stacks, developed in \cite{Ve} and \cite{Ba}.
Building on these works, we construct a derived moduli stack $\Symp(X,n)$ of $n$-shifted symplectic structures on $X$, which has to be thought as a derived enhancement (in the algebraic setting) of the moduli space of symplectic structures studied in \cite{FH}. 

The main result of the present work can be stated as follows.

\begin{thm}
Let $X$ be a nice enough derived stack, and let $n$ be an integer. Then the derived moduli stack $\Symp(X,n)$ of $n$-shifted symplectic structures on $X$ carries a natural quadratic form, in the sense of \cite{Ve} and \cite{Ba}, extending the one of \cite{FH}.
\end{thm}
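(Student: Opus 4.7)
The plan is to realize $\Symp(X,n)$ as an open substack of the derived stack $\cA^{2,cl}(X,n)$ of closed $2$-forms of degree $n$ on $X$ introduced in \cite{PTVV}, and then to construct a symmetric pairing on its tangent complex using the universal symplectic form together with an integration/trace map coming from the ``nice enough'' hypothesis on $X$ (for instance, an orientation in the sense of \cite{PTVV}).

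First, since non-degeneracy of the contraction $\omega^{\flat}: \bT_X \to \bL_X[n]$ is a Zariski-open condition on the underlying $2$-form, one defines $\Symp(X,n)$ as the open substack of $\cA^{2,cl}(X,n)$ cut out by this condition. Because $\cA^{2,cl}(X,n)$ is a linear derived stack, its tangent complex (and hence that of $\Symp(X,n)$ at any symplectic point $\omega$) is canonically identified with an appropriate shift of the complex of closed $2$-forms of degree $n$ on $X$ itself.

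Next, we construct the pointwise pairing. At a symplectic point $\omega$, a tangent vector $\alpha$ is itself a closed $2$-form of degree $n$, and the equivalence $\omega^{\flat}$ converts it into an endomorphism $A_{\alpha} := (\omega^{\flat})^{-1} \circ \alpha^{\flat}$ of $\bT_X$. The pairing is then
\[
q_{\omega}(\alpha,\beta) := \int_X \mathrm{tr}(A_{\alpha} \circ A_{\beta}),
\]
where $\int_X$ denotes the integration/trace map provided by the orientation hypothesis on $X$. Symmetry follows from the cyclicity of the trace, and unpacking in the classical $C^{\infty}$, $n=0$ case should recover the Fricke--Habermann formula.

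To upgrade this to a global quadratic form in the sense of \cite{Ve, Ba}, one would perform the same construction universally: on $\Symp(X,n) \times X$ there is a tautological closed symplectic form of degree $n$, and applying the above operations to its universal first-order deformations should produce a global symmetric section of an appropriate (shifted) closed symmetric square of $\bL_{\Symp(X,n)}$. The main obstacle will be precisely this globalization step: one must verify that the pointwise pairing assembles into a \emph{closed} quadratic form (a section of the correct closed symmetric square of the cotangent complex, not merely of its underlying complex), compatibly with the closedness condition on the universal $2$-form and with the de Rham structure on $\Symp(X,n)$. A subsidiary difficulty is to identify the right form of the ``nice enough'' hypothesis -- likely a version of $\cO$-orientation of an appropriate degree -- so that the integration map has the correct degree and descends properly to derived stacks.
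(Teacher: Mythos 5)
Your pointwise pairing is exactly the one the paper uses: a tangent vector $\alpha$ at a symplectic point $\omega$ is turned into an endomorphism of the cotangent complex via $\omega^{\sharp}$, and the quadratic form is $\tfrac12\mathrm{Tr}(A_\alpha A_\beta)$, recovering Fricke--Habermann. Two of your worries are also non-issues in the paper's framework: a quadratic form in the sense of \cite{Ve} is merely a map $\cO[-m]\to\Sym^2\bL$, with no closedness condition, so the ``closed symmetric square'' compatibility you flag as the main obstacle does not arise; and the paper gets its map to $A$ from the plain trace on the $\dg_A$-enriched endomorphisms of $\bL_{X\times Y/Y}$ under the $\cO$-compactness and local finite presentation hypotheses, without invoking an orientation.

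The genuine gap is in your choice of ambient moduli stack. You propose to realize $\Symp(X,n)$ inside $\cA^{2,cl}(X,n)$, but the paper explicitly observes that this is the wrong functor of points: an $A$-point of $\cA^{2,cl}(X,n)$ is a closed $2$-form on $X\times\Spec A$ relative to $k$, whereas a family of symplectic structures on $X$ parametrized by $\Spec A$ must be a closed $2$-form relative to $\Spec A$. The two stacks agree on $k$-points but not on $A$-points, and in particular your claim that the tangent complex of $\cA^{2,cl}(X,n)$ at $\omega$ is ``the complex of closed $2$-forms on $X$ itself'' fails, since $\DR(X\times\Spec A/k)$ contains cross-terms involving $\bL_A$ and is not $\DR(X)\otimes_k A$. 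The paper resolves this by constructing a different stack $\PrSymp(X,n)$ (via a fiber product over the classifying stacks of graded mixed complexes and their arrows) whose $A$-points are relative closed $2$-forms, and then computes its cotangent complex as $\bL_{\PrSymp(X,n),\omega}\simeq|\underline{\Hom}_{\dgmix_A}(A(2)[-n-2],\DR(X)\otimes_k A)^{\vee}|^{l}$ by an explicit derivation calculation. Without this (or an equivalent relative construction), the identification of tangent vectors with relative $2$-forms --- the starting point of your pairing --- is not available, so your argument as written does not get off the ground.
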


In particular, the above Theorem thus proves a conjecture which was stated in \cite[Remark 3.15]{Ve}.

The paper is organized as follows. In Section 1, we recall some preliminary notions that will be used later on. In Section 2, we give a construction of the derived moduli stack of Lie coalgebroids $\bCoAlgd(X)$ on a derived stack $X$. Moreover, passing through the important notion of symplectic Lie coalgebroid, we arrive to the definition of the derived moduli stack of $n$-shifted pre-symplectic structures $\PrSymp(X,n)$. Section 3 is devoted to the computation of the cotangent complex of $\Symp(X,n)$, which is the sub-stack of $\PrSymp(X,n)$ whose points correspond to $n$-shifted symplectic structures on $X$. The techniques used here are similar to the ones in \cite{Ba}.
Finally, in the last section we construct a shifted quadratic form on $\Symp(X,n)$, proving our main result. 

Let us mention that in \cite{FHH}, the authors gave a criterion for the non-degeneracy of the pseudo-Riemaniann structure of \cite{FH}. It would be interesting to see if one can generalize their arguments to derived algebraic geometry. In other words, one should be able to characterize the non-degeneracy of the shifted quadratic structure we construct in the present paper. We plan to come back to this question in a future work.
\subsection*{Acknowledgements}

We would like to thank M. Porta and G. Vezzosi for interesting discussions related to the subject of this paper. Moreover, we thank B. To\"en for pointing out a missing argument in a previous version, as well as the anonymous referee for his/her useful comments. The work of V.M. was partially supported by FIRB2012 ``Moduli spaces and applications''.

\subsection*{Notations and conventions}

\begin{itemize}

\item The $\infty$-category of simplicial sets is denoted by $\sSet$.

\item Let $\cC$ be an $\infty$-category. Given any two objects $X,Y$ of $\cC$, we will denote by $\Map_{\cC}(X,Y) \in \sSet$ the mapping space between them. If moreover $\cC$ is a closed monoidal $\infty$-category, we will write $\bMap_{\cC}(X,Y) \in \cC$ for the internal hom-object of $\cC$.

\item $k$ denotes a base field of characteristic 0. We denote by $dg_k$ the ordinary category of cochain complexes of $k$-vector spaces. This has the usual projective model structure, where weak equivalences are given by quasi-isomorphisms, and fibrations by surjections. We write $\dg_k$ for the associated $\infty$-category. The standard tensor product of complexes makes $\dg_k$ a symmetric monoidal $\infty$-category. More generally, if $A$ is a commutative dg algebra over $k$, then $\dg_A$ will denote the symmetric monoidal $\infty$-category of dg $A$-modules.
Given two $A$-modules $M$ and $N$, we denote by $\Map_A(M,N) \in \sSet$ the mapping space in $\dg_A$.
\item The ordinary category of commutative dg algebras concentrated in non-positive degree will be denoted $cdga^{\leq 0}$. It has the usual model structure for which weak equivalences are quasi-isomorphisms, and fibrations are surjections in negative degrees. We write $\cdga^{\leq 0}$ for the associated $\infty$-category. The $\infty$-category $\dAff$ of derived affines is simply the opposite category $(\cdga^{\leq 0})^{op}$.

\item The $\infty$-category of \emph{derived stacks} (in the sense of \cite[Section 2.2]{TVe}) with values in $\sSet$ over $k$ is denoted $\dSt$. The direct product makes $\dSt$ a closed monoidal $\infty$-category.

\item Given $A \in \dAff^{op}$, we also use the notations $\QCoh(A)$ for the symmetric monoidal $\infty$-category of dg $A$-modules $\dg_A$, while $\Perf(A)$ will denote the full subcategory of \emph{perfect} dg $A$-modules. If $X$ is a derived stack, we define the symmetric monoidal $\infty$-categories
\[ \QCoh(X): = \lim_{\Spec A \to X} \QCoh(A)\ \ \ \ \ \ \text{and}\ \ \  \ \ \ \Perf(X) :=\lim_{\Spec A \to X} \Perf(A), \]
where the limits are taken in the $\infty$-category of stable symmetric monoidal presentable $\infty$-categories.
If $\cF \in \Perf(X)$ is a perfect complex, we will denote by $\cF^{\vee} \in \Perf(X)$ its $\cO_X$-linear dual.

\item If $\cC$ is an $\infty$-category, we denote by $\cC^{\sim}$ the maximal $\infty$-subgroupoid contained in $\cC$. We use the notation $\Arr(\cC)$ for the $\infty$-category of morphisms in $\cC$.

\item If $\cC$ is an $\infty$-category, we denote by $\cC^{gr} = \prod_{p \in \mathbb Z} \cC$ the $\infty$-category of graded objects in $\cC$. If $\cC$ has a symmetric monoidal structure, we will also implicitly consider $\cC^{gr}$ with its induced symmetric monoidal structure. 

\end{itemize}

\section{Preliminaries}

\subsection{Graded mixed complexes}\label{sect:gradedmixed}
Let $A \in \cdga^{\leq 0}$ be a commutative dg algebra. The category $\dgmix_A$ is the symmetric monoidal $\infty$-category of graded mixed $A$-modules. We refer to \cite[Section 1]{CPTVV} for a detailed construction of this $\infty$-category, which is defined as the $\infty$-category associated to a model category $dg_A^{gr,\epsilon}$. Its objects are graded complex $\{M(i)\}_{i \in \mathbb Z}$, together with a mixed structure $\e$, that is to say a series of maps of complexes
\[ \e : M(i) \to M(i+1) \]
of degree 1, such that $\e^2=0$. Unless otherwise specified, we will only be interested in \emph{perfect} graded mixed complexes. In other words, we will always suppose that all the $M(i)$ are perfect complexes of $A$-modules.

Consider the functor
\[ \triv \colon \dg_A \to \dgmix_A \]
sending an $A$-module $M$ to the same object with trivial graded mixed structure. In other terms, $\triv(M)$ is just $M$ concentrated in weight 0, and $\epsilon$ is identically zero. Following \cite[Section 1.1]{CPTVV} and \cite[Section 1.2]{MS}, we give the following definition.

\begin{defn}
\begin{itemize}
\item The right adjoint to the functor $\triv$ is called the \emph{realization functor}
\[ | - | \colon \dgmix_A \to \dg_A. \]
\item The left adjoint to the functor $\triv$ is called the \emph{left realization functor}
\[ | - |^l \colon \dgmix_A \to \dg_A. \]
\end{itemize}
\end{defn}

Let $M \in \dgmix_A$ be a graded mixed complex. Then one has the following explicit model for the realization functor:
\[ |M| \simeq \prod_{p \geq 0} M(p), \]
where the differential is twisted by the mixed structure of $M$. Similarly, one also have an analogous model for the left realization functor
\[ |M|^l \simeq \bigoplus_{p\leq 0} M(p), \]
where again the differential is twisted by the the mixed structure of $M$. 

The category $\dgmix_A$ admits a natural symmetric monoidal structure, defined weight-wise by 
\[ (M \otimes_A M' )(p) := \bigoplus_{i+j=p} M(i) \otimes_A M(j),\]
where the mixed structure on $M \otimes_A M'$ is the natural one. Moreover, given two objects $M, M' \in \dgmix_A$, we can consider an internal object of morphisms $\underline \Hom_{\dgmix_A}(M,M')$, whose weight components are defined by
\[ \underline \Hom_{\bdgmix_A}(M,M')(p) := \prod_{q \in \Z}\underline \Hom_A(M(q), M'(p+q)), \]
where $\underline \Hom_A(-,-)$ denotes the internal Hom object in $A$-modules. In the special case where $M' \simeq \triv(A)$ is the monoidal unit of $\dgmix_A$, we use the shorter notation
$M^{\vee} = \underline \Hom_{\dgmix_A}(M,A)$
for the dual of $M$.

Recall that we are implicitly assuming that the weight components of our graded mixed complexes are perfect. In particular, the tensor product interacts nicely with duals, and we get natural identifications
\begin{equation}\label{eq:1}
\vspace{5pt}(M^{\vee})^{\vee} \simeq M, \ \ (M \otimes_A N)^{\vee} \simeq M^{\vee} \otimes_A N^{\vee},
\end{equation}
for every $M,N \in \dgmix_A$. Moreover, an easy computation also shows that
\begin{equation}\label{eq:2}
\underline \Hom_{\dgmix_A}(M,N) \simeq \underline \Hom_{\dgmix_A}((N)^{\vee},M^{\vee}),
\end{equation}
again for all $M,N \in \dgmix_A$.

Notice however that since $M,N$ are in general unbounded in the weight direction, we cannot expect to be able to identify the internal Hom object $ \underline\Hom_{\dgmix_A}(M,N) $ with the tensor product $M^{\vee} \otimes_A N$. On the other hand, a straightforward check tells us that if we suppose $M$ to be bounded in the weight direction, then we do have
\begin{equation}\label{eq:3}
\underline\Hom_{\dgmix_A}(M,N) \simeq M^{\vee} \otimes_A N.
\end{equation}

The symmetric monoidal structure on the $\infty$-category $\dgmix_A$ allows us to consider commutative algebras inside it. In particular, we will denote by $\CAlg_{\Perf}^{gr,\epsilon}(A)$ the $\infty$-category of commutative algebras in $\dgmix_A$. A detailed construction of this $\infty$-category can be found in \cite[Sections 1.1 and 1.2]{CPTVV}. The objects of $\CAlg_{\Perf}^{gr,\epsilon}(A)$ are thus identified with graded mixed $A$-modules $(\{ B(i) \}_{i \in \mathbb Z}, \epsilon)$, with $B(i) \in \Perf(A)$, together with a series of multiplication maps
\[ m: B(i) \otimes_A B(j) \to B(i+j) \]
which are associative and commutative. Moreover, $m$ and $\epsilon$ have to be compatible, in the sense that $\epsilon$ is a derivation with respect to the product $m$.

\subsection{Moduli stacks of perfect complexes}\label{sect:perfect}



Let $\bPerf$ be the classifying stack of perfect complexes, as studied in \cite{TVa}. Recall that the values of $\bPerf$ on derived affines have the following explicit description:
\[ \Map_{\dSt}(\Spec A, \bPerf) \simeq \Perf(A)^{\sim},\]
where $\Perf(A)^{\sim}$ is the maximal groupoid contained in the $\infty$-category $\Perf(A)$ of perfect complexes on $\Spec A$.

\begin{remark}\label{rmk:perfstackcats}
One can also start from the slightly more general derived stack $\underline{\bPerf}$ with values in $\infty$-categories, whose evaluation at $\Spec A$ is the whole category $\Perf(A)$. This can be constructed using \cite[Section 1]{TVeTraces}. Then the usual (i.e. with values in $\sSet$) derived stack $\bPerf$ is obtained by composing with the maximal $\infty$-subgroupoid functor $(-)^{\sim}$.
\end{remark}
\begin{defn}
Let $X$ be a derived Artin stack. 
The moduli stack $\bPerf(X)$ of perfect complexes on $X$ is the internal mapping stack
\[ \bPerf(X) := \bMap_{\dSt}(X, \bPerf) \]
in the closed $\infty$-category $\dSt$ of derived stacks.
\end{defn}

Notice that by definition of the mapping stack, for every $\Spec A \in \dAff$ we have equivalences
\[ \Map_{\dSt}(\Spec A, \bPerf(X)) \simeq \Perf(X \times \Spec A)^{\sim}\]
of simplicial sets. In other terms, $A$-points of $\bPerf(X)$ can be identified with perfect complexes on $X \times \Spec A$.
We will therefore often use the same notation for a perfect complex on $X \times \Spec A$ and a map $\Spec A \to \bPerf(X)$.

By functoriality of the mapping stack, for every map $f : X \to Y$ of derived stacks we get an induced morphism $\boldsymbol{f}^* : \bPerf(Y) \to \bPerf(X)$ of derived stack. When evaluated at $A$-points, the morphism $\boldsymbol{f}^*$ is induced by the pullback $\infty$-functor of perfect complexes
\[ (f \times \id)^* : \Perf(Y \times \Spec A) \to \Perf(X \times \Spec A). \]

Moreover, one also has a derived stack $\bPerf^{\Delta^1}$ whose $A$-points correspond to (equivalence classes of) maps of $A$-perfect complexes. As before, $\bPerf^{\Delta^1}$ is the underlying derived stack in simplicial sets of a more general derived stack $\underline{\bPerf}^{\Delta^1}$ with values in $\infty$-categories, which sends $\Spec A$ to $\Arr(\Perf(A))$. In particular, we have an equivalence
\[ \Map_{\dSt}(\Spec A, \bPerf^{\Delta^1}) \simeq (\Arr(\Perf(A)))^\sim \]
of simplicial sets. We stress that $\bPerf^{\Delta^1}(A)$ is the space of \emph{all} morphisms between perfect complexes, and not just equivalences. Notice that $\bPerf^{\Delta^1}$ comes equipped with a couple of projections
\[ (s,t) : \bPerf^{\Delta^1} \to \bPerf \times \bPerf \]
which intuitively send a morphism to its source and its target.


\subsection{$\cO$-compact and $d$-oriented derived stacks}

We recall here the notion of $\cO$-compactness for derived stacks, following \cite[Section 2.1]{PTVV}.

\begin{defn}[see \cite{PTVV}, Definition 2.1]\label{defn:O-cpt}
Let $f: X \to Y$ be a map between derived stack. We say that $f$ is \emph{$\cO$-compact} if for every derived affine stack $Z= \Spec A$ over $Y$, the following two condition are satisfied:
\begin{enumerate}
\item $\cO_{X \times_Y Z}$ is compact in $\QCoh(X \times_Y Z)$.
\item If $p_Z: X \times_Y Z \to Z$ is the natural projection, the pushforward $(p_Z)_*$ sends perfect modules to perfect modules.
\end{enumerate}
We say that a derived stack $X$ is $\cO$-compact if its structural map $p: X \to \Spec k$ is $\cO$-compact in the sense above. 
\end{defn}

\begin{remark}
One can check that if $f: X \to Y$ is $\cO$-compact, then for any derived stack $Z$ over $Y$ (not necessarily affine) the induced pushforward $\QCoh(X \times_Y Z) \to \QCoh(Z)$ preserves perfect complexes.
\end{remark}

\begin{remark}
If $X$ is supposed to be $\cO$-compact, then in this case the global section functor $p_*$ sends perfect $\cO_X$-modules to perfect $k$-modules. More generally, if $f:X \to Y$ is $\cO$-compact, we have an induced morphism of derived stacks $\boldsymbol{f}_* : \bPerf(X) \to \bPerf(Y)$.
\end{remark}

We now recall another important notion that was introduced in \cite{PTVV}.

Fix an integer $d \in \mathbb Z$. 
Let $f: X \to Y$ be an $\cO$-compact map of derived stacks. Suppose we are given a map $\eta \colon f_*\cO_X \to \cO_Y[-d]$ of perfect $\cO_Y$-modules, and let $Z=\Spec A $ be a derived affine stack over $Y$. If we denote by $p_A$ the induced map
\[ p_A : X \times_Y Z \to Z, \]
the pullback of $\eta$ along $Z \to Y$ produces a morphism
\[ \eta_A \colon (p_A)_*\cO_{X \times_Y Z} \to A[-d] \]
of perfect $A$-modules.
Take any $\cF \in \Perf(X \times_Y Z)$. 
We get a naturally induced pairing
\[ \xymatrix{
(p_A)_*\cF \otimes_{A} (p_A)_*\cF^{\vee} \ar[r] & (p_A)_*(\cF \otimes_{\cO_{X\times_Y Z}} \cF^\vee) \ar[r] & (p_A)_*\cO_{X\times_Y Z} \ar[r]^{\ \ \ \eta_A} & A[-d]} \]
between $(p_A)_*\cF$ and $(p_A)_*(\cF^\vee)$, where the first map on the left comes from the fact that 
\[(p_A)_* : \Perf(X \times_Y Z) \to \Perf(Z)\]
is lax-monoidal (since its left adjoint $(p_A)^*$ is symmetric monoidal). This in turn induces a morphism
\[ (p_A)_*(\cF^\vee) \to ((p_A)_*\cF)^\vee[-d]. \]


\begin{defn}[see \cite{PTVV}, Definition 2.4]
Let $f : X \to Y$ be an $\cO$-compact map of derived stacks, and let $ d \in \bZ$.
\begin{itemize}
\item The space $\mathrm{PrOrient}(f,d)$ of \emph{$d$-preorientations} on $f$ is defined to be the mapping space
\[ \mathrm{PrOrient}(f,d) := \Map_{\cO_Y}(f_*\cO_X, \cO_Y[-d]) \]
in the category of $\cO_Y$-modules.
\item A point $\eta \in \mathrm{PrOrient}(f,d)$ is said to be \emph{non-degenerate} if for any derived affine stack $Z = \Spec A$ over $Y$ and any $\cF \in \Perf(X \times_Y Z)$, the induced morphism
\[ (p_A)_*\cF^{\vee} \to ((p_A)_*\cF)^\vee[-d] \]
is an equivalence of $A$-modules.
\item The space $\mathrm{Orient}(X,d)$ of \emph{$d$-orientations} on $f$ is the subspace of $\mathrm{PrOrient}(X,d)$ given by the union of connected components of non-degenerate points.
\end{itemize}
\end{defn}


If $Y = \Spec k$, we will simply use the notations $\mathrm{PrOrient}(X,d)$ and $\mathrm{Orient}(X,d)$, and call them the spaces of $d$-preorientations and of $d$-orientations on $X$.

\begin{defn}
Let $X$ be an $\cO$-compact derived stack.
\begin{itemize}
\item
We define the derived stack $\mathbf{PrOrient}(X,d)$ of $d$-preorientations on $X$
to be the following fiber product in $\dSt$:
\[ \xymatrix{
\mathbf{PrOrient}(X,d) \ar[d] \ar[rrr] & & & \bPerf^{\Delta^1} \ar[d]^{(s,t)} \\
\Spec k \ar[r]^{\cO_X \ \ \ \ \ \ \ \ \ \ \ \ \ \ \ \ \ } & \bPerf(X) \simeq \bPerf(X) \times  \Spec k  \ar[rr]^{\ \ \ \ \ \ \ \ \ \ \ \boldsymbol{f}_* \times k[-d]} & & \bPerf \times \bPerf,
}\]
where the bottom left map corresponds to the perfect complex $\cO_X \in \Perf(X)$, and the bottom right map is the product of $\boldsymbol{f}_*: \bPerf(X) \to \bPerf$ and of the $k$-point of $\bPerf$ corresponding to $k[-d]$.
\item The derived stack $\mathbf{Orient}(X,d)$ of $d$-orientations on $X$ is the derived substack of $\mathbf{PrOrient}(X,d)$ given by non-degenerate preorientations.
\end{itemize}
\end{defn}

Let $\Spec A \in \dAff$. By definition, we can identify the $A$-points of $\mathbf{PrOrient}(X,d)$ and of $\mathbf{Orient}(X,d)$ with the spaces $\mathrm{PrOrient}(p_A, d)$ and $\mathrm{Orient}(p_A,d)$ respectively, where $p_A$ is the projection $X \times \Spec A \to \Spec A$.

\begin{remark}
We could have constructed both stacks $\mathbf{PrOrient}(X,d)$ and $\mathbf{Orient}(X,d)$ more directly, starting from the observation that for every map $A \to B$ of commutative dg algebras, there are well defined morphisms
\[ \mathrm{PrOrient}(p_A,d) \to \mathrm{PrOrient}(p_B,d) \]
and
\[ \mathrm{Orient}(p_A,d) \to \mathrm{Orient}(p_B,d) \]
in the homotopy category of simplicial sets.
\end{remark}



\subsection{Quadratic and symplectic structures on derived stacks}

In this section we recall the notion of quadratic and symplectic structures on derived Artin stacks, following \cite{Ve} and \cite{PTVV}.

The following is essentially Definition 3.14 in \cite{Ve}.

\begin{defn}
Let $X$ be a derived Artin stack, and let $\bL_X \in \QCoh(X)$ be its cotangent complex. The \emph{space of $n$-shifted quadratic structures on $X$} is
\[ \mathrm{QF}(X,n) := \Map_{\QCoh(X)}(\cO_X[-n] , \Sym^2_{\cO_X} (\bL_X) ). \]
\end{defn}

\begin{remark}
The above Definition is a very mild generalization of \cite[Definition 3.14]{Ve}. The only difference here is that we allow the cotangent complex $\bL_X$ of $X$ to be possibly not perfect. In the special case of $X$ being locally finitely presented, then the two definition are clearly equivalent.
The situation here is totally analogous to the case of shifted Poisson structures (see \cite{Me} and \cite[Remark 1.4.10]{CPTVV}).
\end{remark}

Let $X$ be a derived Artin stack. Consider the graded dg-module
\[ \DR(X) := \Gamma(X,\Sym_{\cO_X}(\bL_X[-1])) \]
where the additional weight grading is given by the $\Sym$. Since as already mentioned the functor $\Gamma(X,-)$ is lax monoidal, $\DR(X)$ is naturally a commutative graded dg algebra. Then the de Rham differential turns $\DR(X)$ into a graded mixed algebras, that is to say a commutative algebra in the category $\dgmix$.

The following is essentially Definition 1.12 in \cite{PTVV} (see also \cite[Definition 2.4.14]{CPTVV}).

\begin{defn}
The space of \emph{closed $p$-forms of degree $n$ on $X$} is the mapping space
\[ \mathcal A^{p,cl}(X,n) := \Map_{\dgmix}(k(p)[-n-p], \DR(X)), \]
where $k(p)[-n-p]$ is the trivial graded mixed module $k$ sitting in weight $p$ and cohomological degree $n+p$.
\end{defn}

Suppose moreover that $X$ is locally of finite presentation, so that its cotangent complex $\bL_X$ is perfect. In particular, consider its dual $\bT_X= \bL_X^{\vee}$ in $\QCoh(X)$. We say that a closed 2-form of degree $n$ is an $n$-symplectic structure if the induced map $\bT_X \to \bL_X[-n]$ is an equivalence.

\section{The moduli stack of Lie coalgebroids}

The goal of this section is to construct an appropriate moduli stack of Lie coalgebroids on a given derived stack $X$. 

\begin{defn}
Let $X \in \dSt$. The $\infty$-category $\CAlg_{\Perf}^{gr}(X)$ is the $\infty$-category of commutative algebras in the symmetric monoidal $\infty$-category $\Perf(X)^{gr}$. If $X \simeq \Spec A$ is a derived affine, we will simply use the notation $\CAlg_{\Perf}^{gr}(A)$.
\end{defn}

Notice that for every map $A  \to B$ of commutative dg algebras, the induced pullback $\infty$-functor $\Perf(A) \to \Perf(B)$ is symmetric monoidal, and thus we get an induced base change $\infty$-functor
\[ \CAlg_{\Perf}^{gr}(A) \to  \CAlg_{\Perf}^{gr}(B), \]
which is an equivalence whenever $A\to B$ is an equivalence itself.
It follows that, as discussed in Remark \ref{rmk:perfstackcats}, we can apply the methods of \cite[Section 1]{TVeTraces} to define a cofibered $\infty$-category over $\dAff^{op}$, or equivalently a prestack $\underline{\bCAlg}^{gr}_{\Perf}$ with values in $\infty$-categories. This functor sends a commutative dg algebra $A$ to the associated $\infty$-category $\CAlg^{gr}_{\Perf}(A)$.
The prestack $\underline{\bCAlg}^{gr}_{\Perf}$ is a derived stack with respect to the (derived) \'etale topology, simply because $\underline{\bPerf}^{gr}$ was already a derived stack in $\infty$-categories.
Composing with the underlying maximal $\infty$-subgroupoid functor $(-)^\sim$,
we get an induced classifying stack of graded perfect commutative algebras $\bCAlg^{gr}_{\Perf} \in \dSt$, whose space of $A$-points is equivalent to $\CAlg_{\Perf}^{gr}(A)^{\sim}$.

\begin{defn}
If $X$ is a derived stack, then the \emph{classifying stack $\bCAlg_{\Perf}^{gr}(X)$ of graded perfect commutative algebras on $X$} is the mapping stack
\[ \bCAlg_{\Perf}^{gr}(X) := \bMap_{\dSt}(X, \bCAlg_{\Perf}^{gr}). \]
\end{defn}

\begin{remark}
The functoriality of the mapping stack once again implies that the construction $X \mapsto \bCAlg_{\Perf}^{gr}$ is functorial. More specifically, if $f: X\to Y$ is a map of derived stack, then there exists a pullback morphism
\[ f^* : \bCAlg_{\Perf}^{gr}(Y) \to \bCAlg_{\Perf}^{gr}(X) \]
of derived stacks. If moreover we suppose $f$ to be $\cO$-compact, we also get a pushforward morphism
\[ f_* : \bCAlg_{\Perf}^{gr}(X) \to \bCAlg_{\Perf}^{gr}(Y). \]
Notice that the existence of $f_*$ uses the fact that the pushforward of perfect complexes is a lax-monoidal functor, and as such it preserves commutative algebra objects.
\end{remark}

By definition, we have an equivalence
\[ \Map_{\dSt}(\Spec A, \bCAlg_{\Perf}^{gr}) \simeq \CAlg_{\Perf}^{gr}(X \times \Spec A) \]
of simplicial sets.
Notice that there is a natural map of derived stacks
\[ \bPerf(X) \to \bCAlg_{\Perf}^{gr}(X), \]
which is induced by the $\infty$-functors
\[ \xymatrix{
\Perf(X \times \Spec A) \ar[rr]& & \Perf(X \times \Spec A)^{gr} \ar[rrr]^{\Sym_{\cO_{X \times \Spec A}}(-)} & & & \CAlg^{gr}_{\Perf}(X \times \Spec A)
} \]
where the map on the left sends a perfect complex $\mathcal F$ to the complex $\mathcal F[-1]$ sitting on weight $1$, and the second functor is the free commutative algebra functor.
In particular, if we suppose that $X$ is moreover $\cO$-compact in the sense of Definition \ref{defn:O-cpt}, we can consider the composition of maps of derived stacks
\[ \bPerf(X) \to \bCAlg^{gr}_{\Perf}(X) \to \bCAlg^{gr}_{\Perf}, \]
where the map on the right is now the pushforward induced by $X \to \Spec k$, which exists because $X$ is $\cO$-compact.
The composition above corresponds, for every $\Spec A \in \dAff$, to sending a perfect complex $\cF \in \Perf(X \times \Spec A)$ to the perfect graded commutative algebra $(p_A)_*\Sym_{\cO_X}(\cF[-1]) \in \CAlg^{gr}_{\Perf}(A)$, where $p_A : X \times \Spec A \to \Spec A$ is the projection on the second term. Note that the weight grading on $(p_A)_*\Sym_{\cO_X}(\cF[-1])$ coincides with the natural one induced by the symmetric powers.

Moreover, let $\bdgmix_{\Perf}$ and $\bCAlgmix_{\Perf}$ be the classifying stack of graded mixed complexes and of graded mixed commutative algebras respectively, constructed in the same way as $\bPerf$ and $\bCAlg^{gr}_{\Perf}$. More specifically, we have equivalences
\[ \Map_{\dSt}(\Spec A, \bdgmix_{\Perf}) \simeq (\dgmix_A)^\sim, \ \ \ \ \ \ \Map_{\dSt}(\Spec A, \bCAlgmix_{\Perf}) \simeq (\CAlg^{gr,\epsilon}(A))^\sim.\] 
Then we have a natural forgetful map
\[ \bCAlgmix_{\Perf} \to \bCAlg_{\Perf}^{gr} \]
which simply forgets the mixed structure.


Using these stacks, we can now give the following definition.

\begin{defn}
Let $X$ be a derived $\cO$-compact stack. The \emph{moduli stack of perfect Lie coalgebroids on $X$} is the fiber product
\[ \xymatrix{
\bCoAlgd(X) \ar[r] \ar[d] & \bPerf(X) \ar[d] \\
\bCAlgmix_{\Perf} \ar[r] & \bCAlg^{gr}_{\Perf}
}\]
in the category $\dSt$ of derived stacks.
\end{defn}

\begin{remark}
Even though our notation $\bCoAlgd(X)$ doesn't suggest it, an important point is that we are only working with Lie coalgebroids whose underlying module is perfect. One could of course avoid this restriction, and give a similar definition for Lie coalgebroids on a general (i.e. not necessarily $\cO$-compact) derived stack $X$.
\end{remark}

By definition, a $k$-point of $\bCoAlgd(X)$ corresponds to a perfect complex $\cF$ on $X$ with a mixed structure on $\Gamma(X, \Sym_{\cO_X}(\cF[-1]))$. These are precisely perfect Lie co-algebroids on $X$, that is to say perfect complexes on $X$ whose duals are Lie algebroids. The mixed structure here is the data corresponding to the Chevalley-Eilenberg differential on the CE algebra for the Lie algebroid. 

In general, $A$-points of $\bCoAlgd(X)$ are given by a perfect complex $\cF$ on $X \times \Spec A$, together with a $A$-linear mixed structure on the graded algebra
\[ (p_A)_*(\Sym_{\cO_{X \times \Spec A}}(\cF[-1])),  \]
where $p_A : X \times \Spec A \to \Spec A$ is the natural projection. 

\begin{remark}
Suppose that $X$ is a derived Artin stack. It follows that there is a distinguished point of $\bCoAlgd(X)$, i.e. a canonical map
\[ \Spec k \to \bCoAlgd(X) \]
representing the Lie co-algebroid $\bL_X$ (that is to say, the dual of the tangent Lie algebroid $\bT_X$).
\end{remark}

We denote by $(\bdgmix_{\Perf})^{\Delta^1}$ the derived stack of morphisms of $\bdgmix_{\Perf}$, constructed similarly to $\bPerf^{\Delta^1}$ in Section \ref{sect:perfect}. More specifically, the space of $A$-points of $(\bdgmix_{\Perf})^{\Delta^1}$ is equivalent to $(\Arr(\dgmix(A))^{\sim}$,
where $\Arr(\dgmix_A)$ is the $\infty$-category of morphisms in $\dgmix_A$. By definition, the stack $(\bdgmix_{\Perf})^{\Delta^1}$ comes equipped with two natural maps $s$ and $t$ (for ``source'' and ``target'')
\[ \xymatrix{
\bdgmix_{\Perf} & \ar[l]_{s\ \ } (\bdgmix_{\Perf})^{\Delta^1} \ar[r]^{\ \ t} & \bdgmix_{\Perf}
}\]
which remember only the source or the target of the points of $(\bdgmix_{\Perf})^{\Delta^1}$.

Now consider the induced composition of maps of stacks
\[ \bCoAlgd(X) \to \bCAlgmix_{\Perf} \to \bdgmix_{\Perf} \]
where the first map is the one coming from the definition of $\bCoAlgd(X)$, and the second forgets the algebra structure, and just retains the underlying graded mixed module. Let us call $\phi$ this composition. 

\begin{defn}
The moduli stack $\mathcal Y_n$ of \emph{$n$-pre-symplectic Lie co-algebroids} on $X$ is the fiber product
\[ \xymatrix{
\mathcal Y_n \ar[r] \ar[d] & \bCoAlgd(X) \ar[d]^{(k(2)[-n-2], \phi)} \\
(\bdgmix_{\Perf})^{\Delta^1} \ar[r]^{(s,t)\ \ \ \  } & \bdgmix_{\Perf} \times \bdgmix_{\Perf} 
} \]
in the category $\dSt$ of derived stacks.
\end{defn}

Again by definition, a $k$-point of $\mathcal Y_n$ is given by the following data
\begin{itemize}
\item a perfect module $\cF$ on $X$
\item a mixed structure on the graded commutative algebra $\Gamma(X, \Sym_{\cO_X}(\cF[-1]))$
\item a map of graded mixed modules
\[ k[-n-2](2) \longrightarrow \Gamma(X, \Sym_{\cO_X}(\cF[-1])). \]
\end{itemize}
In general, $A$-points of $\mathcal Y_n$ are perfect modules $\cF$ on $X \times \Spec A$, such that
\[ (p_A)_*\Sym_{\cO_{X \times \Spec A}}(\cF[-1]) \]
is a $A$-linear graded mixed commutative algebra, together with a map of graded mixed complexes
\[ A(2)[-n-2] \longrightarrow (p_A)_*\Sym_{\cO_{X \times \Spec A}}(\cF[-1]). \]

\begin{defn}
Finally, suppose $X$ is a derived Artin stack. The moduli stack of $n$-pre-symplectic structures $\PrSymp(X,n)$ is the fiber product
\[ \xymatrix{
\PrSymp(X,n) \ar[r] \ar[d] & \mathcal Y_n \ar[d] \\
\Spec k \ar[r] & \bCoAlgd(X) 
} \]
where the bottom map is the one representing $\bL_X$.
\end{defn}

\section{The cotangent complex of $\Symp(X,n)$}

In this section, we study the geometry of the stack $\PrSymp(X,n)$ in more detail: in particular, we compute its cotangent complex, following the explicit definition of \cite[Section 1.2.1]{TVe}.

Throughout this section, $X$ will be an $\cO$-compact derived stack.
As a first remark, we notice that by definition this stack fits in a cartesian square
\[
\xymatrix{
\PrSymp(X,n) \ar[r] \ar[d] & \Spec k \ar[d]^{k(2)[-n-2] \times \DR(X)} \\
(\bdgmix_{\Perf})^{\Delta^1} \ar[r]^{(s,t) \ \ } & \bdgmix_{\Perf} \times \bdgmix_{\Perf}.
}
\]
In other words, a $k$-point of $\PrSymp(X,n)$ is just a map
\[ k(2)[-n-2] \to \DR(X) \]
of graded mixed complexes, i.e. a closed 2-form of degree $n$ on $X$. More generally, an $A$-point of $\PrSymp(X,n)$ is a degree $n$ closed 2-form of $X \times \Spec A$ relative to $A$, or equivalently a map of graded mixed $A$-modules
\[A(2)[-n-2] \to p_*(\Sym_{\cO_{X \times \Spec A}} \bL_{X\times \Spec A / \Spec A}[-1]) \simeq \DR(X) \otimes_k A. \]

\begin{remark}
Even though $k$-points of $\PrSymp(X,n)$ are the same as $k$-points of $\mathcal A^{2,cl}(X,n)$, the two stacks are not equivalent, as $A$-points of $\mathcal A^{2,cl}(X,n)$ are just degree $n$ closed 2-forms on $X \times \Spec A$ relative to $k$. 
\end{remark}

Let us now consider an $A$-point $\omega$ of $\PrSymp(X,n)$, corresponding to a map
\[\omega \colon A(2)[-n-2] \to \DR(X) \otimes_k A. \]
With a slight abuse of notation, let us also denote by $\omega$ the induced $A$-point of $(\bdgmix_{\Perf})^{\Delta^1}$. Using the fact the the above diagram is cartesian, we know that if the bottom map $(s,t)$ has a relative cotangent complex at $\omega$, then also $\bL_{\PrSymp(X,n), \omega}$ exists, and moreover we have an equivalence
\[ \bL_{\PrSymp(X,n), \omega} \simeq \bL_{(s,t), \omega}. \]

\begin{prop}
Let again $(s,t)$ be the map of derived stacks
\[ (\bdgmix_{\Perf})^{\Delta^1} \longrightarrow \bdgmix_{\Perf} \times \bdgmix_{\Perf} \]
sending a morphism to its source and target. Let $f : \Spec A \to (\bdgmix_{\Perf})^{\Delta^1}$  correspond to a map $f : E \to F$ in $\bdgmix_{\Perf}(A)$. Then $(s,t)$ admits a cotangent complex at the point $f$, which is given by
\[ \bL_{(s,t), f} \simeq | \underline{\Hom}_{\dgmix_A}(E,F)^{\vee}|^l ,\]
where $|-|^l$ is the left realization of Section \ref{sect:gradedmixed}.
\end{prop}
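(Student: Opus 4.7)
The plan is to use the universal property of the cotangent complex: for any (perfect, connective) $A$-module $M$, one has
\[ \Map_{\dg_A}(\bL_{(s,t), f}, M) \simeq \hofib_f \big( (\bdgmix)^{\Delta^1}(A \oplus M) \to (\bdgmix)^{\Delta^1}(A) \times_{(\bdgmix)^2(A)} (\bdgmix)^2(A \oplus M) \big), \]
so it suffices to identify the right-hand side with $\Map_{\dg_A}(|\underline\Hom_{\dgmix_A}(E, F)^\vee|^l, M)$ naturally in $M$. By the explicit description of $(\bdgmix)^{\Delta^1}$ as sending $B$ to $\Arr(\dgmix_B)^\sim$, the homotopy fiber above is exactly the space of deformations of $f: E \to F$ to a morphism in $\dgmix_{A \oplus M}$ whose source and target are the trivial $(A\oplus M)$-extensions of $E$ and $F$. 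By extension of scalars, this space is the homotopy fiber at $f$ of $\Map_{\dgmix_A}(E, F \otimes_A (A \oplus M)) \to \Map_{\dgmix_A}(E, F)$, which is
\[ \Map_{\dgmix_A}(E, F \otimes_A \triv(M)), \]
where $M$ is viewed as the graded mixed module $\triv(M)$ concentrated in weight $0$.

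It then remains to construct a natural equivalence
\[ \Map_{\dgmix_A}(E, F \otimes_A \triv(M)) \simeq \Map_{\dg_A}(|\underline\Hom_{\dgmix_A}(E, F)^\vee|^l, M), \]
which will be done through a chain of identifications each of which instantiates one of (\ref{eq:1})--(\ref{eq:3}) or the internal hom-tensor adjunction. First, because $\triv(M)^\vee \simeq \triv(M^\vee)$ is concentrated in weight $0$ and hence bounded in weight, equation (\ref{eq:3}) gives $F \otimes_A \triv(M) \simeq \underline\Hom_{\dgmix_A}(\triv(M^\vee), F)$, so by hom-tensor adjunction
\[ \Map_{\dgmix_A}(E, F \otimes_A \triv(M)) \simeq \Map_{\dgmix_A}(E \otimes_A \triv(M^\vee), F) \simeq \Map_{\dgmix_A}(\triv(M^\vee), \underline\Hom_{\dgmix_A}(E, F)). \]
Applying the duality (\ref{eq:2}) and identifying $\triv(M^\vee)^\vee$ with $\triv(M)$ via (\ref{eq:1}), this space is equivalent to $\Map_{\dgmix_A}(\underline\Hom_{\dgmix_A}(E, F)^\vee, \triv(M))$. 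The final step is to invoke the defining adjunction $|-|^l \dashv \triv$, which rewrites the latter space as $\Map_{\dg_A}(|\underline\Hom_{\dgmix_A}(E, F)^\vee|^l, M)$. The Yoneda lemma then yields the claimed formula for $\bL_{(s,t), f}$.

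The main technical point to watch is that $\underline\Hom_{\dgmix_A}(E, F)$ is typically unbounded in the weight direction (because of the infinite products defining its components), so one cannot apply equation (\ref{eq:3}) directly to this object; the trick is to apply it instead to $\triv(M^\vee)$, which is trivially bounded. Relatedly, the fact that the left realization $|-|^l$ rather than the right realization $|-|$ appears in the answer is precisely what makes the chain of adjunctions above work: the left adjoint of $\triv$ is what encodes the extension-of-scalars by the trivially graded mixed module $\triv(M)$ produced in the deformation-theoretic computation.
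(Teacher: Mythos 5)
Your proposal is correct and follows essentially the same route as the paper: identify the relative derivation space $\Der_{(s,t)}(A,M)$ with $\Map_{\dgmix_A}(E, F\otimes_A \triv(M))$, rewrite this as $\Map_{\dgmix_A}(\underline{\Hom}_{\dgmix_A}(E,F)^{\vee}, \triv(M))$ using the duality identifications, and conclude via the adjunction $|-|^l \dashv \triv$. The only difference is that you spell out the middle step (which the paper dismisses as ``in view of the identifications (1) and (2)'') as an explicit chain through equation (3) applied to the weight-bounded object $\triv(M^\vee)$, which is a legitimate and arguably more careful way to carry out the same argument.
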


\begin{proof}
Let $M$ be an $A$-module. A straightforward computation shows that the space of relative derivations can be expressed as
\[ \Der_{(s,t)}(A,M) \simeq \Map_{\dgmix_A}(E , F \otimes_A M), \]
where $M$ is taken with the trivial graded mixed structure. Moreover, in view of the identifications (\ref{eq:1}) and (\ref{eq:2}) in the category $\dgmix_A$, we get
\[ \Map_{\dgmix_A}(E, F \otimes_k M) \simeq \Map_{\dgmix_A}(\underline \Hom_{\dgmix_A}(E,F)^\vee, M). \]
By definition of left realization, we conclude by adjunction that
\[ \Der_{(s,t)}(A,M) \simeq \Map_{\dg_A}(| \underline{\Hom}_{\dgmix_A}(E,F)^\vee|^l, M), \]
which proves the proposition.
\end{proof}

An immediate consequence of the above result is the following corollary.

\begin{prop}\label{prop:localcotangent}
The derived stack $\PrSymp(X,n)$ admits a cotangent complex in every point. In particular, given an $A$-point $\omega$ of $\PrSymp(X,n)$, we have
\[ \bL_{\PrSymp(X,n), \omega} \simeq \bigoplus_{p \geq 2} \underline \Hom_A (A[-n-2], \DR(X)(p) \otimes_k A)^\vee. \]
\end{prop}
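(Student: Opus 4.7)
The plan is to combine the cartesian square given at the beginning of this section with the previous proposition, and then unwind the resulting expression weight by weight. I do not expect any new geometric input beyond those two ingredients.

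First, I would apply base change for cotangent complexes to the cartesian square expressing $\PrSymp(X,n)$ as the fiber of $(s,t)$ along $k(2)[-n-2] \times \DR(X)$. This simultaneously yields the existence of $\bL_{\PrSymp(X,n), \omega}$ and an equivalence $\bL_{\PrSymp(X,n), \omega} \simeq \bL_{(s,t), \tilde\omega}$, where $\tilde\omega \colon \Spec A \to (\bdgmix)^{\Delta^1}$ is the induced $A$-point, classifying the morphism $A(2)[-n-2] \to \DR(X) \otimes_k A$ obtained from $\omega$ by base change. Plugging $E = A(2)[-n-2]$ and $F = \DR(X) \otimes_k A$ into the previous proposition then gives
\[ \bL_{\PrSymp(X,n), \omega} \simeq \bigl| \underline{\Hom}_{\dgmix_A}(A(2)[-n-2], \DR(X) \otimes_k A)^{\vee} \bigr|^{l}. \]

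Next I would unwind this formula. Since $A(2)[-n-2]$ is concentrated in weight $2$, it is bounded in weight, so formula (\ref{eq:3}) applies and each weight component of the internal Hom collapses to a single term. Explicitly, the weight $p$ piece of $\underline{\Hom}_{\dgmix_A}(A(2)[-n-2], \DR(X) \otimes_k A)$ equals $\underline{\Hom}_A(A[-n-2], \DR(X)(p+2) \otimes_k A)$, supported in $p \geq -2$. Taking the graded mixed dual flips the weight grading, so the dual is supported in $p \leq 2$ and its weight $p$ component is the $A$-linear dual of the weight $-p$ piece above. Finally, the left realization takes the direct sum over weights $p \leq 0$; after the substitution $q = 2 - p$, which ranges over $q \geq 2$, this reproduces the announced formula.

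The only mildly delicate point is the very first simplification: the complex $F = \DR(X) \otimes_k A$ is genuinely unbounded in weight, so one cannot blindly replace the internal Hom by a tensor product of duals. What makes the argument go through is the boundedness of $E$, which is precisely the hypothesis under which (\ref{eq:3}) applies on the $E$-side. Everything else is bookkeeping with weights, and the role of the left realization is precisely to pick out the non-positive weights of the dual, which correspond to the weights $\geq 2$ of $\DR(X)$.
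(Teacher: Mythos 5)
Your proof is correct and follows essentially the same route as the paper: both deduce existence and the identification $\bL_{\PrSymp(X,n),\omega} \simeq \bL_{(s,t),\tilde\omega}$ from the cartesian square, plug $E = A(2)[-n-2]$ and $F = \DR(X)\otimes_k A$ into the preceding proposition, and then unwind the weight grading of the dual and the left realization. Your version merely makes explicit the weight bookkeeping that the paper compresses into a single asserted equivalence (and you should, as the paper does, note that the resulting direct sum carries the differential twisted by the mixed structure).
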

\begin{proof}
Using that $X$ is $\cO$-compact, this is a simple combination of the above Proposition and the observation that we have an equivalence
\[ | \underline \Hom_{\dgmix_A}(A(2)[-n-2], \DR(X )\otimes A )^\vee|^l \simeq \bigoplus_{p \geq 2} \underline \Hom_A (A[-n-2], \DR(X)(p) \otimes_k A)^\vee, \]
where the right hand side is endowed with the twisted differential coming from the mixed structure on $\DR(X) \otimes A$.
\end{proof}

\begin{remark}
Suppose $X$ is such that $\DR(X)$ is a bounded graded mixed perfect complex. Then the sum appearing in the statement of the above Proposition is in fact finite, and thus $\PrSymp(X,n)$ has a perfect cotangent complex at every point. In particular, it is now easy to see that one has
\[ \bT_{\PrSymp(X,n), \omega} \simeq \underline \Hom_{A}(A[-n-2], |\DR(X)^{\geq 2} \otimes_k A |). \]
Notice that this is in line with the content of the conjecture of \cite[Remark 3.15]{Ve}, as the right hand side is precisely the complex of closed 2-forms of degree $n$ on $X \times \Spec A$, relative to $\Spec A$. 
\end{remark}

\begin{prop}
The derived stack $\PrSymp(X,n)$ has a global cotangent complex.
\end{prop}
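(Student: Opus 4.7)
The plan is to apply the standard criterion for the existence of a global cotangent complex, as found e.g.\ in \cite{TVe}: since the existence of pointwise cotangent complexes at every $A$-point has already been established in Proposition \ref{prop:localcotangent}, it suffices to verify their compatibility with base change. Concretely, for every morphism $u \colon A \to B$ in $\cdga^{\leq 0}$ and every $A$-point $\omega$ of $\PrSymp(X,n)$ with induced $B$-point $u^*\omega$, one must show that the natural map
\[ \bL_{\PrSymp(X,n),\omega} \otimes_A B \lra \bL_{\PrSymp(X,n),u^*\omega} \]
is an equivalence in $\QCoh(\Spec B)$.

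Starting from the explicit description
\[ \bL_{\PrSymp(X,n),\omega} \simeq \bigoplus_{p \geq 2} \underline\Hom_A(A[-n-2], \DR(X)(p) \otimes_k A)^\vee \]
provided by Proposition \ref{prop:localcotangent}, the key observation is that each summand depends on $A$ only through the factor $\otimes_k A$. Indeed, $\underline\Hom_A(A[-n-2], \DR(X)(p) \otimes_k A) \simeq \DR(X)(p)[n+2] \otimes_k A$, and since $\DR(X)(p)$ is perfect over $k$ (by $\cO$-compactness of $X$ together with perfectness of $\bL_X$, so that the pushforward $\Gamma(X, \Sym^p_{\cO_X}(\bL_X[-1]))$ is perfect by Definition \ref{defn:O-cpt}), its $A$-linear dual identifies with $\DR(X)(p)^\vee \otimes_k A [-n-2]$. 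Tensoring this expression with $B$ over $A$ yields precisely the analogous formula for $u^*\omega$. Direct sums commute with $-\otimes_A B$, and the twisted differential induced by the mixed structure on $\DR(X) \otimes_k A$ is manifestly $A$-linear and base changes to the corresponding differential on $\DR(X) \otimes_k B$; so the required equivalence follows.

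The main potential obstacle is the interplay between the dualization, internal Hom, and left realization on the one hand, and base change on the other; this is precisely where the perfectness of $\DR(X)(p)$ over $k$ is essential, as in general the dual of an $A$-module does not commute with tensor. Once perfectness is in hand, the rest of the argument is formal: no boundedness in the weight direction is needed, because the left realization appearing in Proposition \ref{prop:localcotangent} assembles the various weights into a \emph{direct sum} rather than a product, and tensor product is exact and commutes with colimits.
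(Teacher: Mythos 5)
Your proof is correct and follows essentially the same route as the paper: both reduce the existence of a global cotangent complex to checking that the pointwise formula of Proposition \ref{prop:localcotangent} is compatible with base change along $A \to B$. The only cosmetic difference is that the paper invokes the identification $\underline\Hom_{\dgmix_A}(E,F) \simeq E^\vee \otimes_A F$ for $E = A(2)[-n-2]$ bounded in the weight direction, whereas you carry out the same identification weight by weight using perfectness of each $\DR(X)(p)$ over $k$; these amount to the same verification.
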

\begin{proof}
Indeed, suppose we have maps
\[ 
\xymatrix{
\Spec B \ar[r]^{\phi} & \Spec A \ar[r]^{f\ \ \ \ \ \  } & \PrSymp(A,n).
} \]
In view of \cite[Definition 1.4.1.7]{TVe}, we need to check that the induced map
\[\phi^*\bL_{\PrSymp(X,n), f} \to \bL_{\PrSymp(X,n), f \circ \phi} \]
is an equivalence of $B$-modules.
But by the above proposition, this morphism comes from a map of graded mixed complexes
\[ \underline{\Hom}_{\dgmix_A}(E,F)^\vee \otimes_A B \to \underline{\Hom}_{\dgmix_B}(E \otimes_A B , F \otimes_A B)^\vee,\]
where in this particular case $E = A(2)[-n-2]$ and $F = \DR(X) \otimes A.$
Since $E$ is bounded, we know from identification (\ref{eq:3}) that
\[ \underline \Hom_{\dgmix_A}(E,F) \simeq E^\vee \otimes_A F, \]
and thus the above map is an equivalence, concluding the proof.
\end{proof}

\section{The quadratic form}

In this section we state and prove our main result. Namely, we show that one can endow $\Symp(X,n)$ with a canonical quadratic form, extending the one of \cite{FH}. We keep assuming that $X$ is $\cO$-compact.

Let $X$ be a derived Artin stack, locally of finite presentation. Let $Y= \Spec A$ be a derived affine scheme, and let $\omega: Y \to \PrSymp(X,n)$ be represented by an $n$-shifted pre-symplectic structure on $X \times Y$, relative to $Y$. Then we say that $\omega$ is \emph{non-degenerate} if the induced map of $\cO_{X \times Y}$-modules
\[ \omega^{\sharp} \colon \bT_{X \times Y/Y}[-n] \to \bL_{X\times Y / Y} \]
is an equivalence.

\begin{defn}
Let $X$ be a derived Artin stack, locally of finite presentation. The derived stack $\Symp(X,n)$ of $n$-shifted symplectic structures on $X$ is the substack of $\PrSymp(X,n)$ composed of non-degenerate pre-symplectic structure. 
\end{defn}

Notice that $\Symp(X,n)$ is exactly the moduli stack involved in the conjecture in \cite[Remark 3.15]{Ve}.

\



The quadratic structure constructed in \cite{FH} uses in a crucial way the fact that classically symplectic structures induce orientations on the manifold. 
In order to reproduce the arguments of \cite{FH} in our context, we will need a replacement for this result.

Let $p : X \to \Spec k$ be the canonical projection. Let $d\in \mathbb Z$, and denote by $F^d_X$ the prestack given by the following fiber product
\[ \xymatrix{ F^d_X \ar[rrr] \ar[d]  & & & \bPerf^{\Delta^1} \ar[d]^{(s,t)}  \\
\Spec k \ar[r]^{\cO_X \ \ \ \ \ \ \ \ \ \ \ \ \ \ \ \ } & \bPerf(X) \simeq \Spec k \times \bPerf(X) \ar[rr]^{\ \ \ \ \ \ \ \ \ \ \ k[-d] \times p_*}& &\bPerf \times \bPerf.
} \]

By definition, we have an equivalence
\[ \Map_{\dSt}(\Spec A, F^d_X) \simeq \Map_A(A, (p_A)_*\cO_{X \times \Spec A}[d]), \]
where $p_A : X \times \Spec A \to \Spec A$ is as usual the standard projection. One can think of $F^d_X$ as a stack of ``degree $d$ global functions'' on $X$. Notice that since $(p_A)_*\cO_{X \times \Spec A}$ is an $A$-algebra for every $A$, any point $\phi \in F^d_X(A)$ induces a multiplication map
$m_{\phi} : F^0_X(A) \to F^d_X(A)$.
\begin{defn}\label{def:finalhyp}
Suppose $X$ is a $\cO$-compact derived stack, equipped with a map of derived stacks
\[ \eta \colon  \Symp(X,n)  \to  \mathbf{Orient}(X,d)\]
sending a point $\omega \in \Symp(X,n)(A)$ to a $d$-orientation $\eta_\omega \in \mathrm{Orient}(p_A,d)$, where $p_A$ is the projection
\[ X \times \Spec A \to \Spec A. \]
Moreover, suppose that we have a map
\[ v:  \Symp(X,n) \to F^d_X, \]
which we suppose to be non-zero.
Then for every $\omega \in \Symp(X,n)(A)$ and $f \in F^0_X(A)$, we can define the \emph{integral} $\int_X f$ by applying the composition
\[ \xymatrix{
F^0_X(A) \ar[r]^{m_{v(\omega)}} & F^d_X(A) \ar[r]^{\ \ \eta_\omega} & A
} \]
to the point of $F^0_X(A)$ corresponding to $f$. Notice in particular that by definition we have $\int_X(f) \in A$.
\end{defn}

\begin{thm}\label{thm:quadform}
	 Suppose that $X$ satisfies the hypothesis of Definition \ref{def:finalhyp}. Then there exists a canonical non-trivial quadratic form on $\Symp(X,n)$, extending the one of \cite{FH}.
\end{thm}

\begin{proof}
Let again $Y=\Spec A$ be a derived affine scheme. Since $\Symp(X,n)$ is an open substack of $\PrSymp(X,n)$, it follows that if $\omega$ is an $A$-point of $\Symp(X,n)$, then one has an equivalence
\[ \bL_{\Symp(X,n), \omega} \simeq \bL_{\PrSymp(X,n),\omega} \]
of $A$-modules. In particular, thanks to Proposition \ref{prop:localcotangent} we have
\[ \bL_{\Symp(X,n), \omega} \simeq | \underline \Hom_{\dgmix_A}(A(2)[-n-2], \DR(X) \otimes A)^{\vee}|^l. \]
As a consequence, by considering only the weight 0 component of the right hand side we get a morphism
\[ \underline \Hom_{A}(A[-n-2], \DR(X)(2)\otimes A)^{\vee} \longrightarrow \bL_{\Symp(X,n), \omega} \]
of $A$-modules. Remark that the source is in fact by definition the $A$-linear dual of the complex of 2-forms on $X \times \Spec A$, relative to $\Spec A$. Since $X$ is supposed to be locally of finite presentation, its cotangent complex is in particular perfect. It follows that by adjunction, we get a morphism
\[ \underline \Hom_{A}(A[-n-2], \DR(X)(2)\otimes A) \longrightarrow \Hom_{\QCoh(X\times Y)}(\bT_{X \times Y / Y}[-n] , \bL_{X \times Y / Y}) \]
of $A$-modules, where the Hom on the right hand side is the $\dg_A$-enriched Hom of $\QCoh(X\times Y)$. On the other hand, the $A$-point $\omega$ corresponds to a symplectic structure, and therefore it gives an identification $ \bT_{X \times Y/Y}[-n] \simeq \bL_{X \times Y/ Y}$ given by $\omega^{\sharp}$. Hence, we get a map (and in fact an equivalence)
\[ \Hom_{\QCoh(X\times Y)}(\bT_{X \times Y / Y}[-n] , \bL_{X \times Y / Y}) \longrightarrow (p_A)_*\underline{\End} (\bL_{X \times Y / Y }) \]
of $A$-modules, where the $\underline\End$ on the right is the internal endomorphism object of $\QCoh(X\times Y)$, and $p_A$ is as usual the projection $X \times Y \to Y$. Dualizing these last morphisms and putting all together, we end up with
\[ ((p_A)_*\underline\End( \bL_{X \times Y / Y }))^{\vee} \longrightarrow \bL_{\Symp(X,n), \omega} \]
of $A$-modules. Finally, notice that there are canonical $A$-linear maps
\[ \Sym_A^2 ((p_A)_*\underline\End(\bL_{X \times Y / Y})) \to (p_A)_*\Sym_{\cO_{X \times Y}}(\underline\End(\bL_{X \times Y / Y})) \to (p_A)_*\cO_{X \times Y} \]
where the last arrow is simply induced by the standard formula
\[ (M,N) \mapsto \frac 12 \mathrm{Tr}(MN), \]
as in \cite[Section 2]{FH}.

Applying the integration of Definition \ref{def:finalhyp}, we get a well defined
\[ \int_X \frac 12 \mathrm{Tr}(MN)  \ \in \,A,\]
which together with the above discussion produces a morphism
\[ \Sym_A^2 ((p_A)_*\underline\End(\bL_{X \times Y / Y})) \to A. \]
Dualizing this map, we eventually get a well defined quadratic structure on $\Symp(X,n)$, which clearly extends the one constructed in \cite{FH}.

\end{proof}

\begin{remark} Theorem \ref{thm:quadform} can be mildly generalized to \emph{spaces}, getting a statement which is somehow more in line with the general philosophy of derived algebraic geometry. More specifically, the same arguments in the proof of Theorem \ref{thm:quadform} show that there exists a map
\[ \Map_{\dSt}(\Symp(X,n) , \mathbf{Orient}(X,d) \times F^d_X) \to \QF(\Symp(X,n),0) \]
of simplicial sets, which sends a couple $(\eta, v)$ to the quadratic form constructed above. 
\end{remark}

\end{document}